\newcommand\blfootnote[1]{%
  \begingroup
  \renewcommand\thefootnote{}\footnote{#1}%
  \addtocounter{footnote}{-1}%
  \endgroup
}
\theoremstyle{definition}
\newtheorem{theorem}{Theorem}[section]
\numberwithin{equation}{section}
\newtheorem{definition}[theorem]{Definition}
\newtheorem{lemma}[theorem]{Lemma} 
\patchcmd{\@maketitle}{\begin{center}}{\begin{flushleft}}{}{}
\patchcmd{\@maketitle}{\begin{tabular}[t]{c}}{\begin{tabular}[t]{@{}l}}{}{}
\patchcmd{\@maketitle}{\end{center}}{\end{flushleft}}{}{}
\begin{document}

\title{Basic trigonometric Korovkin approximation for fuzzy valued functions of two variables}
\author{Enes Yavuz}
\date{{\small Department of Mathematics, Manisa Celal Bayar University, Manisa, Turkey.\\ E-mail: enes.yavuz@cbu.edu.tr}}
\maketitle
\thispagestyle{titlepage}
\blfootnote{\emph{Key words and phrases:} fuzzy set theory, Korovkin approximation theorems, summability methods\\\rule{0.63cm}{0cm}\emph{\!\!Mathematics Subject Classification:} 03E72, 41A36, 40A05, 40G99}

\noindent\textbf{Abstract:}
\noindent We prove the basic trigonometric Korovkin approximation theorem for fuzzy valued functions of two variables and verify the approximation by the help of fuzzy modulus of continuity. Also, we introduce double level Fourier series of fuzzy valued functions and investigate corresponding approximation through the use of Ces\`{a}ro and Abel methods of summation of infinite series.

\section{Introduction and Preliminaries}\allowdisplaybreaks
\noindent Korovkin theorems examine approximation of continuous functions via linear operators by exhibiting finite number of test functions whose approximation ensures the approximation of all functions in the space\cite{korovkin, korovkin1}. Since their introduction, Korovkin theorems have impressed many mathematicians due to their simplicity and power when dealing with approximation of functions. Researchers have presented various test functions in different spaces of functions and provided applications concerning some known approximation processes via linear operators. For some of these results we refer to \cite{temel1,altomare4,altomare9}.

The events that humankind meets in real-world generally contain incomplete and unclear data. Researchers proposed many uncertainty theories(e.g. evidence theory, probability theory, fuzzy set theory) to handle imprecise knowledge occurring in real-world facts. Among them, fuzzy set theory was invented by Zadeh\cite{zadeh} and applied to many fields of science. In mathematics a great deal of concepts were extended to fuzzy settings. In particular, fuzzy approximation theorems were proved to approximate continuous fuzzy valued functions and corresponding rates of approximation were examined via fuzzy modulus of continuity\cite{fuzzyfejer1,ftemel4,yavuzbukres}. In this study we establish two fuzzy trigonometric Korovkin approximation theorems and apply established theorems to double level Fourier series of fuzzy valued functions by the help of some known summation procedures. Besides, uniform continuity of $2\pi$-periodic and continuous fuzzy valued functions of two variables is achieved through Lemma \ref{uniform1} and Theorem \ref{uniform2}. Now we give some preliminaries.

A \textit{fuzzy number} is a fuzzy set on the real axis, i.e. $\mu$ is normal, fuzzy convex, upper semi-continuous and $\operatorname{supp}\mu =\overline{\{t\in\mathbb{R}:\mu(t)>0\}}$ is compact \cite{zadeh}. $\alpha$-level set of $\mu$
is defined by
\begin{eqnarray*}
[\mu]_\alpha:=\left\{\begin{array}{ccc}
\{t\in\mathbb{R}:\mu(t)\geq\alpha\} & , & \qquad if \quad 0<\alpha\leq 1, \\[6 pt]
\overline{\{t\in\mathbb{R}:\mu(t)>\alpha\}} & , &
if \quad \alpha=0.\end{array} \right.
\end{eqnarray*}
$\mathbb{R}_{\mathcal{F}}$ is the set of fuzzy numbers. If $\mu,\eta\in \mathbb{R}_{\mathcal{F}}$ and $k\in\mathbb{R}$, then
\begin{eqnarray*}
[\mu+ \eta]_\alpha=[\mu]_\alpha +[\eta]_\alpha=[\mu^-_{\alpha}+\eta^-_{\alpha}, \mu^+_{\alpha}+\eta^+_{\alpha}]\quad ,\quad [k \mu]_\alpha=k[\mu]_\alpha
\end{eqnarray*}
where $[\mu]_\alpha=[\mu^-_{\alpha}, \mu^+_{\alpha}]$, for all $\alpha\in[0,1]$. Partial ordering on $\mathbb{R}_{\mathcal{F}}$ is given by:
\begin{eqnarray*}
\mu\preceq \eta \Longleftrightarrow
[\mu]_\alpha\preceq[\eta]_\alpha\Longleftrightarrow \mu^-_{\alpha}\leq
\eta^-_{\alpha}~\text{ and }~\mu^+_{\alpha}\leq \eta^+_{\alpha}~\text{ for
all }~\alpha\in[0,1].
\end{eqnarray*}
The metric $D$ on $\mathbb{R}_{\mathcal{F}}$ is defined by
\begin{eqnarray*}
 D(\mu,\eta):=
\sup_{\alpha\in[0,1]}\max\{|\mu^-_{\alpha}-\eta^-_{\alpha}|,|\mu^+_{\alpha}-
\eta^+_{\alpha}|\}.
\end{eqnarray*}
A function $f$ on $\mathbb{R}^2$ is $2\pi$-periodic if
\begin{eqnarray*}
f(u,v)&=&f(u+2n\pi ,v)=f(u, v+2n\pi)
\end{eqnarray*}
holds for $n=0, \pm 1,\pm 2,\ldots.$ $C_{2\pi}(\mathbb{R}^2)$ is the set of all $2\pi$-periodic and real valued continuous functions on   $\mathbb{R}^2$ with the norm
\begin{eqnarray*}
\Vert f\Vert=\sup_{(u,v)\in\mathbb{R}^2}|f(u,v)| \qquad (f\in C(\mathbb{R}^2)).
\end{eqnarray*}
For fuzzy valued function $f:K \subset\mathbb{R}^2\to \mathbb{R}_{\mathcal{F}}$, $\alpha$-level set of $f$ is
\begin{eqnarray*}
\left[f(u,v)\right]_{\alpha}=\left[f^-_{\alpha}(u,v),f^+_{\alpha}(u,v)\right]
\end{eqnarray*}
for every $(u,v)\in K$ and $\alpha\in[0,1]$. $C_{2\pi}^{(\mathcal{F})}(\mathbb{R}^2)$ is the set of all $2\pi$-periodic and continuous fuzzy valued functions on $\mathbb{R}^2$ and the continuity is with respect to metric $D$. $C_{2\pi}^{(\mathcal{F})}(\mathbb{R}^2)$ is equipped with the metric
\begin{eqnarray*}
D^{*}(f,g)=\sup_{(u,v)\in \mathbb{R}^2}D(f(u,v),g(u,v))=\sup_{(u,v)\in \mathbb{R}^2}\sup_{\alpha\in[0,1]}\max\{|f^-_{\alpha}(u,v)-g^-_{\alpha}(u,v)|,|f^+_{\alpha}(u,v)-g^+_{\alpha}(u,v)|\}.
\end{eqnarray*}
For $f\in  C_{2\pi}^{(\mathcal{F})}(\mathbb{R}^2)$, the modulus of continuity is given by
\begin{eqnarray*}
\omega^{\mathcal{F}}(f; \delta):=\sup\{D\left(f(u,v), f(x,y)\right): (u,v),(x,y)\in\mathbb{R}^2,  \sqrt{(u-x)^2+(v-y)^2}\leq \delta\}
\end{eqnarray*}
for any $\delta>0$.

We now prove following lemma.
\begin{lemma}\label{uniform1}
If $f\in  C_{2\pi}^{(\mathcal{F})}(\mathbb{R}^2)$, then for all $\delta\in[0,\pi]$ we have
\begin{eqnarray*}
\omega^{\mathcal{F}}(f\big|_{[0,2\pi]\times[0,2\pi]}; \delta)\leq\omega^{\mathcal{F}}(f; \delta)\leq 3\omega^{\mathcal{F}}(f\big|_{[0,2\pi]\times[0,2\pi]}; \delta).
\end{eqnarray*}
\end{lemma}
\begin{proof}
For convenience denote $(x,y),(u,v)\in\mathbb{R}^2$ with $z=(x,y),w=(u,v)$ and $d(z,w)=\sqrt{(x-u)^2+(y-v)^2}$. The left hand side inequality is obvious. For the right side, let us part $\mathbb{R}^2$ as $I_{mn}=[2m\pi, 2(m+1)\pi]\times[2n\pi, 2(n+1)\pi]$ where $m,n\in\mathbb{Z}$. Then for any $z,w\in\mathbb{R}^2$ with $d(z,w)\leq \delta$, there exist following possibilities:
\begin{enumerate}
\item[(1)] $\exists m,n\in\mathbb{Z}$ such that $z,w\in I_{mn}$
\item[(2)] $\exists m,n\in\mathbb{Z}$ such that $z\in I_{mn}, w\in I_{m+1,n}$ or $z\in I_{m+1,n}, w\in I_{mn}$
\item[(3)] $\exists m,n\in\mathbb{Z}$ such that $z\in I_{mn}, w\in I_{m,n+1}$ or $z\in I_{m,n+1}, w\in I_{mn}$
\item[(4)] $\exists m,n\in\mathbb{Z}$ such that $z\in I_{mn}, w\in I_{m+1,n+1}$ or $z\in I_{m+1,n+1}, w\in I_{mn}$
\item[(5)] $\exists m,n\in\mathbb{Z}$ such that $z\in I_{mn}, w\in I_{m-1,n+1}$ or $z\in I_{m-1,n+1}, w\in I_{mn}$
\end{enumerate}
We do the proofs for only the first parts of the cases since the proofs for the second parts are done similarly by changing the roles of $z$ and $w$. Before to continue with the proofs, let $z'=(x-2m\pi,y-2n\pi), w'= (u-2m\pi,v-2n\pi)$ and denote the set of all points on the line segment connecting the points $z'$ and $w'$ with $\ell$. We note that $d(z',w')=d(z,w)\leq \delta$.

{\it Case (1).} $z,w\in I_{mn}$ and so we have $z',w'\in I_{00}$. Then from periodicity of $f$ we get
\begin{eqnarray*}
D(f(z),f(w))=D(f(z'),f(w'))\leq \omega^{\mathcal{F}}(f\big|_{I_{00}}; \delta)\leq3\omega^{\mathcal{F}}(f\big|_{I_{00}}; \delta).
\end{eqnarray*}

{\it Case (2).} $z\in I_{mn}, w\in I_{m+1,n}$ and so we have $z'\in I_{00}, w'\in I_{10}$. From periodicity of $f$ we get
\begin{eqnarray*}
D(f(z),f(w))&=&D(f(z'),f(w'))
\\&\leq&
D(f(z'),f(\ell'))+D(f(\ell'),f(w'))
\\&\leq&
\omega^{\mathcal{F}}(f\big|_{I_{00}}; \delta)+\omega^{\mathcal{F}}(f\big|_{I_{10}}; \delta)
\\ &=&
2\omega^{\mathcal{F}}(f\big|_{I_{00}};\delta)\leq3\omega^{\mathcal{F}}(f\big|_{I_{00}};\delta)
\end{eqnarray*}
where $\ell'=\ell\cap I_{00}\cap I_{10}$.

{\it Cases (3).} A procedure analogous to that in case (2) can be applied by choosing $\ell'=\ell\cap I_{00}\cap I_{01}$.

{\it Cases (4).} $z\in I_{mn}, w\in I_{m+1,n+1}$ and so we have $z'\in I_{00}, w'\in I_{11}$. If $(2\pi,2\pi)\in\ell$ then
\begin{eqnarray*}
D(f(z),f(w))&=&D(f(z'),f(w'))
\\&\leq&
D(f(z'),f(2\pi,2\pi))+D(f(2\pi,2\pi),f(w'))
\\&\leq&
\omega^{\mathcal{F}}(f\big|_{I_{00}}; \delta)+\omega^{\mathcal{F}}(f\big|_{I_{11}}; \delta)
\\&=&
2\omega^{\mathcal{F}}(f\big|_{I_{00}};\delta)\leq3\omega^{\mathcal{F}}(f\big|_{I_{00}};\delta).
\end{eqnarray*}
If $(2\pi,2\pi)\not\in\ell$ and $\ell\cap I_{10}\neq\varnothing$ then we choose $\ell'=\ell\cap I_{00}\cap I_{10}$ and $\ell''=\ell\cap I_{10}\cap I_{11}$. Then we get
\begin{eqnarray*}
D(f(z),f(w))&=&D(f(z'),f(w'))
\\&\leq&
D(f(z'),f(\ell'))+D(f(\ell'),f(\ell''))+D(f(\ell''),f(w'))
\\&\leq&
\omega^{\mathcal{F}}(f\big|_{I_{00}}; \delta)+\omega^{\mathcal{F}}(f\big|_{I_{10}}; \delta)+\omega^{\mathcal{F}}(f\big|_{I_{11}}; \delta)
\\&=&
3\omega^{\mathcal{F}}(f\big|_{I_{00}};\delta).
\end{eqnarray*}
If $(2\pi,2\pi)\not\in\ell$ and $\ell\cap I_{10}=\varnothing$, then we choose $\ell'=\ell\cap I_{00}\cap I_{01}$, $\ell''=\ell\cap I_{01}\cap I_{11}$ and get
\begin{eqnarray*}
D(f(z),f(w))&=&D(f(z'),f(w'))
\\&\leq&
D(f(z'),f(\ell'))+D(f(\ell'),f(\ell''))+D(f(\ell''),f(w'))
\\&\leq&
\omega^{\mathcal{F}}(f\big|_{I_{00}}; \delta)+\omega^{\mathcal{F}}(f\big|_{I_{01}}; \delta)+\omega^{\mathcal{F}}(f\big|_{I_{11}}; \delta)
\\&=&
3\omega^{\mathcal{F}}(f\big|_{I_{00}};\delta).
\end{eqnarray*}
{\it Cases (5).} This case is similar to the case (4), hence omitted.

In all cases, taking supremum over $\{z,w\in\mathbb{R}^2: d(z,w)\leq \delta\}$ we get the claim.
\end{proof}
By Lemma \ref{uniform1}, we prove uniform continuity of $f\in  C_{2\pi}^{(\mathcal{F})}(\mathbb{R}^2)$.
\begin{theorem}\label{uniform2}
If $f\in  C_{2\pi}^{(\mathcal{F})}(\mathbb{R}^2)$, then $f$ is bounded and uniformly continuous.
\end{theorem}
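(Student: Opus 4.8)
The plan is to transfer everything to the compact square $Q:=[0,2\pi]\times[0,2\pi]$, where the classical facts about continuous maps on a compact metric space are available, and then to spread the conclusions back over all of $\mathbb{R}^2$ using $2\pi$-periodicity together with Lemma \ref{uniform1}. Throughout, I keep $(x,y),(u,v)\in\mathbb{R}^2$ abbreviated as $z,w$ with $d(z,w)=\sqrt{(x-u)^2+(y-v)^2}$, as in the proof of Lemma \ref{uniform1}.

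For boundedness I would first note that $f\big|_{Q}$ is a continuous map from the compact metric space $(Q,d)$ into the metric space $(\mathbb{R}_{\mathcal F},D)$, so its image is a compact, hence $D$-bounded, subset of $\mathbb{R}_{\mathcal F}$; thus $M:=\sup_{z\in Q}D(f(z),\overline{0})<\infty$. Given an arbitrary $z=(x,y)\in\mathbb{R}^2$, I pick $k,l\in\mathbb{Z}$ with $(x-2k\pi,\,y-2l\pi)\in Q$, and then $2\pi$-periodicity gives $D(f(x,y),\overline 0)=D(f(x-2k\pi,y-2l\pi),\overline 0)\le M$. Hence $f$ is bounded on $\mathbb{R}^2$.

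For uniform continuity it suffices to show $\omega^{\mathcal F}(f;\delta)\to 0$ as $\delta\to 0^+$, since the definition of $\omega^{\mathcal F}$ is exactly the statement that for every $\varepsilon>0$ there is $\delta>0$ with $D(f(z),f(w))<\varepsilon$ whenever $d(z,w)\le\delta$. By the right-hand inequality in \eqref{deltaineq} of Lemma \ref{uniform1}, $\omega^{\mathcal F}(f;\delta)\le 3\,\omega^{\mathcal F}(f\big|_{Q};\delta)$ for $\delta\in[0,\pi]$, so it is enough to prove $\omega^{\mathcal F}(f\big|_{Q};\delta)\to 0$. This is the Heine--Cantor theorem applied to the continuous map $f\big|_{Q}\colon (Q,d)\to(\mathbb{R}_{\mathcal F},D)$: if it failed there would be $\varepsilon_0>0$ and sequences $(z_n),(w_n)$ in $Q$ with $d(z_n,w_n)\to 0$ but $D(f(z_n),f(w_n))\ge\varepsilon_0$; compactness of $Q$ lets me pass to a subsequence with $z_{n_k}\to z^\ast\in Q$, whence $w_{n_k}\to z^\ast$ as well, and continuity of $f$ together with the triangle inequality gives $D(f(z_{n_k}),f(w_{n_k}))\le D(f(z_{n_k}),f(z^\ast))+D(f(z^\ast),f(w_{n_k}))\to 0$, a contradiction. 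Therefore $\omega^{\mathcal F}(f\big|_{Q};\delta)\to 0$, and hence $\omega^{\mathcal F}(f;\delta)\to 0$, i.e. $f$ is uniformly continuous on $\mathbb{R}^2$.

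I do not expect a genuine analytic obstacle here; the only point that requires care is that both \emph{bounded} and \emph{uniformly continuous} must be understood in the strong sense of the metric $D$ (that is, uniformly in the level parameter $\alpha$), which is why I argue directly with $D$ and $D^{*}$ rather than with the parametric endpoints $f^{\mp}_{\alpha}$, and why Lemma \ref{uniform1} — rather than a one-dimensional periodicity argument applied coordinatewise — is the tool that lets the estimates on the compact square be globalised.
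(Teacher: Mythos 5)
Your proposal is correct and follows essentially the same route as the paper: both arguments restrict $f$ to the compact square $[0,2\pi]\times[0,2\pi]$, obtain boundedness and uniform continuity there (the paper by citing Lemma 10.8 and Proposition 10.7(v) of Anastassiou together with Heine--Cantor, you by proving the compactness and sequential-compactness facts directly), and then globalise via $2\pi$-periodicity and the right-hand inequality of Lemma \ref{uniform1}. The only difference is that you supply self-contained proofs of the steps the paper delegates to cited results.
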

\begin{proof}
Let $f\in  C_{2\pi}^{(\mathcal{F})}(\mathbb{R}^2)$. By Lemma 10.8 in \cite[p. 131]{anastassiou} we have
\begin{eqnarray*}
D(f(u,v),\bar{0})\leq M, \quad \forall (u,v)\in [0,2\pi]\times[0,2\pi],\quad M>0.
\end{eqnarray*}
For any $(x,y)\not\in[0,2\pi]\times[0,2\pi]$  there exists $(u,v)\in[0,2\pi]\times[0,2\pi]$ such that $(x,y)=(u+2m\pi, v+2n\pi)$ where $m,n\in\mathbb{Z}$. Hence we have
\begin{eqnarray*}
D(f(x,y),\bar{0})=D(f(u,v),\bar{0})\leq M, \quad \forall (x,y)\in \mathbb{R}^2\backslash[0,2\pi]\times[0,2\pi]
\end{eqnarray*}
which proves that fuzzy valued function $f$ is bounded on $\mathbb{R}^2$.

$f$ is uniformly continuous on $[0,2\pi]\times[0,2\pi]$ by Heine-Cantor theorem. Then, by $(v)$ of Proposition 10.7 in \cite[p. 129]{anastassiou}, we have
$\lim_{\delta\to 0}\omega^{\mathcal{F}}(f\big|_{[0,2\pi]\times[0,2\pi]};\delta)=0$. Thus by Lemma \ref{uniform1} we get
\begin{eqnarray*}
\lim_{\delta\to 0}\omega^{\mathcal{F}}(f;\delta)=0,
\end{eqnarray*}
and this implies the uniform continuity of $f$ on $\mathbb{R}^2$ in view of $(v)$ of Proposition 10.7 in \cite[p. 129]{anastassiou}.
\end{proof}
In view of Theorem \ref{uniform2} above and Proposition 10.7($i$)-Proposition 10.3 in \cite[pp. 128--129]{anastassiou} we have the following theorem.
\begin{theorem}\cite{anastassiou}\label{thmmodulus1}
If  $f\in  C_{2\pi}^{(\mathcal{F})}(\mathbb{R}^2)$, then for any $\delta>0$
\begin{eqnarray*}
\omega^{\mathcal{F}}(f; \delta)=\sup_{\alpha\in[0,1]}\max\left\{\omega(f^-_{\alpha}; \delta), \omega(f^+_{\alpha}; \delta)\right\}.
\end{eqnarray*}
\end{theorem}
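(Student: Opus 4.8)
The plan is to unwind the two definitions that feed into $\omega^{\mathcal{F}}(f;\delta)$ — the fuzzy modulus and the metric $D$ — and then to reduce the assertion to two elementary facts about suprema: that two suprema may be taken in either order, and that $\sup_t\max\{g(t),h(t)\}=\max\{\sup_t g(t),\sup_t h(t)\}$ for any bounded real functions $g,h$. No deeper property of $f$ is needed for the identity itself; the earlier results only serve to make the quantities finite.

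First I would fix $\delta>0$ and combine the definition of $\omega^{\mathcal{F}}$ with that of $D$ to write
\begin{eqnarray*}
\omega^{\mathcal{F}}(f;\delta)
&=&\sup_{\substack{(x,y),(u,v)\in\mathbb{R}^2\\ \sqrt{(u-x)^2+(v-y)^2}\le\delta}}
D\bigl(f(x,y),f(u,v)\bigr)\\
&=&\sup_{\substack{(x,y),(u,v)\in\mathbb{R}^2\\ \sqrt{(u-x)^2+(v-y)^2}\le\delta}}\ \sup_{\alpha\in[0,1]}
\max\bigl\{\,|f^-_\alpha(x,y)-f^-_\alpha(u,v)|,\ |f^+_\alpha(x,y)-f^+_\alpha(u,v)|\,\bigr\}.
\end{eqnarray*}
Interchanging the two suprema, this equals
\begin{eqnarray*}
\sup_{\alpha\in[0,1]}\ \sup_{\substack{(x,y),(u,v)\in\mathbb{R}^2\\ \sqrt{(u-x)^2+(v-y)^2}\le\delta}}
\max\bigl\{\,|f^-_\alpha(x,y)-f^-_\alpha(u,v)|,\ |f^+_\alpha(x,y)-f^+_\alpha(u,v)|\,\bigr\}.
\end{eqnarray*}
Then, for each fixed $\alpha\in[0,1]$, I would apply the $\max$--$\sup$ identity (the ``$\ge$'' direction being immediate from $\max\{g,h\}\ge g,h$, and the ``$\le$'' direction from $\max\{g(t),h(t)\}\le\max\{\sup g,\sup h\}$ for every $t$) with the index $t=\bigl((x,y),(u,v)\bigr)$ running over all admissible pairs. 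This turns the inner supremum into
\begin{eqnarray*}
\max\Bigl\{\sup_{\sqrt{(u-x)^2+(v-y)^2}\le\delta}|f^-_\alpha(x,y)-f^-_\alpha(u,v)|,\
\sup_{\sqrt{(u-x)^2+(v-y)^2}\le\delta}|f^+_\alpha(x,y)-f^+_\alpha(u,v)|\Bigr\},
\end{eqnarray*}
in which I recognise the two suprema as the classical moduli of continuity $\omega(f^-_\alpha;\delta)$ and $\omega(f^+_\alpha;\delta)$ on $\mathbb{R}^2$. Taking the outer supremum over $\alpha\in[0,1]$ yields precisely $\omega^{\mathcal{F}}(f;\delta)=\sup_{\alpha\in[0,1]}\max\{\omega(f^-_\alpha;\delta),\omega(f^+_\alpha;\delta)\}$.

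The chain of equalities above holds in $[0,\infty]$ with no further input, so the only place where earlier material is genuinely used is in guaranteeing finiteness, which makes the $\max$'s and the statement meaningful. Here Theorem \ref{uniform2} does the work: $f$ being bounded means $D(f(x,y),\bar{0})\le M$ for all $(x,y)$, hence $\Vert f^{\mp}_\alpha\Vert\le M$ and $\omega(f^{\mp}_\alpha;\delta)\le 2M<\infty$ uniformly in $\alpha$, and likewise $\omega^{\mathcal{F}}(f;\delta)\le 2M<\infty$. I do not expect a real obstacle in this argument; the one point requiring care is bookkeeping — spelling out both inequalities in the $\max$--$\sup$ identity and noting that the permutation of the two suprema is unconditional — rather than anything substantive.
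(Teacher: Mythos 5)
Your argument is correct: the interchange of the two suprema is unconditional, the identity $\sup_t\max\{g(t),h(t)\}=\max\{\sup_t g,\sup_t h\}$ holds in $[0,\infty]$ exactly as you prove it, and the inner suprema are by definition the classical moduli $\omega(f^{\mp}_\alpha;\delta)$; your remark that Theorem \ref{uniform2} enters only to guarantee finiteness is also accurate. The paper, however, does not prove this statement at all --- it imports it from Anastassiou's book, deducing it ``in view of Theorem \ref{uniform2} above and Proposition 10.7($i$)--Proposition 10.3'' of that reference, which is why the theorem carries the citation \cite{anastassiou} and is preceded by the boundedness and uniform-continuity result needed to invoke those propositions. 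So the two routes differ in kind rather than in substance: the paper's route is a black-box appeal to external structural facts about the fuzzy modulus and the metric $D$, while yours is a short self-contained verification from the definitions that makes transparent why no property of $f$ beyond measurability of the notation is needed for the identity itself. Your version is arguably preferable for a reader who does not have the reference at hand, at the cost of a few lines of bookkeeping that the citation avoids.
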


Let $T:C_{2\pi}^{(\mathcal{F})}(\mathbb{R}^2)\rightarrow C_{2\pi}^{(\mathcal{F})}(\mathbb{R}^2)$. $T$ is called {\it fuzzy linear} operator if, for every $k_{1},k_{2}\in \mathbb{R}$, $f_{1},f_{2}\in C_{2\pi}^{(\mathcal{F})}(\mathbb{R}^2)$ and $(u,v)\in \mathbb{R}^2$,
\begin{equation*}
T(k_{1} f_{1}+k_{2} f_{2};u,v) = k_{1}  T(f_{1};u,v) + k_{2}  T(f_{2};u,v)
\end{equation*}
is satisfied. $T$ is said to be {\it fuzzy positive linear} operator if it is {\it fuzzy linear} and $f(u,v) \preceq g(u,v)\Rightarrow T(f;u,v) \preceq T(g;u,v)$ holds for any $f,g \in C_{2\pi}^{(\mathcal{F})}(\mathbb{R}^2)$ and $(u,v) \in \mathbb{R}^2$.
\begin{theorem}\cite{double}
If $f:[a,b]\times[c,d]\to \mathbb{R}_{\mathcal{F}}$ is continuous, then
\begin{eqnarray*}
\int_{c}^{d}\int_{a}^{b}f(u,v)dudv
\end{eqnarray*}
exists and
\begin{eqnarray*}
\left[\int_{c}^{d}\int_{a}^{b}f(u,v)dudv\right]_{\alpha}=\left[\int_{c}^{d}\int_{a}^{b}f^{-}_{\alpha}(u,v)dudv, \int_{c}^{d}\int_{a}^{b}f^{+}_{\alpha}(u,v)dudv\right]
\end{eqnarray*}
where the integrals are in the fuzzy Riemann sense.
\end{theorem}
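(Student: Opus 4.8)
The statement is a two–variable analogue of the classical fact that continuous functions are Riemann integrable, transported to $\mathbb{R}_{\mathcal{F}}$, so the plan is to imitate the real–variable proof, replacing $|\cdot|$ by the metric $D$ and using completeness of $(\mathbb{R}_{\mathcal{F}},D)$. Write $R=[a,b]\times[c,d]$. For a tagged partition $P$ of $R$ into subrectangles $R_{ij}$ of area $\Delta_{ij}>0$ with tags $(\xi_{ij},\eta_{ij})\in R_{ij}$, let $S(f,P):=\sum_{i,j}\Delta_{ij}\,f(\xi_{ij},\eta_{ij})\in\mathbb{R}_{\mathcal{F}}$ be the corresponding fuzzy Riemann sum, a finite nonnegative linear combination of fuzzy numbers, well defined by the addition and scalar multiplication recalled in Section 2. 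By definition the double fuzzy Riemann integral, if it exists, is the $D$-limit of $S(f,P)$ as the mesh $\|P\|\to 0$.

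First I would record that $f$ is uniformly continuous on the compact set $R$: this is Heine-Cantor, valid in any metric space, so for every $\varepsilon>0$ there is $\delta>0$ with $D(f(u,v),f(u',v'))<\varepsilon$ whenever $\sqrt{(u-u')^2+(v-v')^2}<\delta$. Next, for two tagged partitions $P_1,P_2$ of mesh $<\delta$ I would pass to a common refinement $P'$, group the cells of $P'$ according to which cell of $P_k$ they lie in, and use the subadditivity, translation invariance and positive homogeneity of $D$ (together with $\sum_{i,j}\Delta_{ij}=(b-a)(d-c)$) to obtain an estimate of the shape $D(S(f,P_1),S(f,P_2))\le 2(b-a)(d-c)\,\varepsilon$. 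Hence the net $(S(f,P))$ is Cauchy; since $(\mathbb{R}_{\mathcal{F}},D)$ is complete (Section 2), the limit exists and, by the usual sandwiching argument, does not depend on the chosen tags or partitions. This is the only genuinely non-routine step: the bookkeeping that shows the $D$-distance between a sum and a refinement is controlled by a single modulus-of-continuity term times the total area of $R$.

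For the parametric representation, observe that for every tagged partition $P$ the nonnegativity of the $\Delta_{ij}$ gives, levelwise,
\[
[S(f,P)]_\alpha=\sum_{i,j}\Delta_{ij}\,[f(\xi_{ij},\eta_{ij})]_\alpha=\Big[\textstyle\sum_{i,j}\Delta_{ij}f^-_\alpha(\xi_{ij},\eta_{ij}),\ \sum_{i,j}\Delta_{ij}f^+_\alpha(\xi_{ij},\eta_{ij})\Big],
\]
so the two endpoints of $[S(f,P)]_\alpha$ are precisely the ordinary Riemann sums of the real continuous functions $f^-_\alpha$ and $f^+_\alpha$ on $R$. Because $D$-convergence forces convergence of the endpoint functions (uniformly in $\alpha$, in particular for each fixed $\alpha$), letting $\|P\|\to 0$ and invoking classical Riemann integrability of $f^\mp_\alpha$ yields
\[
\Big[\int_c^d\!\!\int_a^b f(u,v)\,du\,dv\Big]_\alpha=\Big[\int_c^d\!\!\int_a^b f^-_\alpha(u,v)\,du\,dv,\ \int_c^d\!\!\int_a^b f^+_\alpha(u,v)\,du\,dv\Big].
\]

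Finally I would check that the right-hand interval really is a legitimate family of $\alpha$-level sets in the sense of the Representation Theorem: $\alpha\mapsto\int f^-_\alpha$ nondecreasing and $\alpha\mapsto\int f^+_\alpha$ nonincreasing, left continuity on $(0,1]$, right continuity at $0$, and $\int f^-_1\le\int f^+_1$ all follow from the corresponding properties of $f^-_\alpha,f^+_\alpha$ (which hold since $f$ is fuzzy valued) combined with linearity and monotonicity of the ordinary integral. This confirms that the fuzzy object produced by the limit is exactly the one with the stated level sets, completing the proof. A possible alternative is to bypass the Cauchy estimate by reducing twice to the one-variable fuzzy Riemann integral and quoting its existence, but then an additional Fubini-type identification would be needed to match the double integral with the iterated one, so the direct route above seems cleaner.
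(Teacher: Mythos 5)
This theorem is imported verbatim from \cite{double} and the paper offers no proof of it, so there is no in-paper argument to compare yours against; I can only judge your proposal on its own terms, and on those terms it is sound. Your route is the standard transplantation of the real-variable existence proof: uniform continuity on the compact rectangle (Heine--Cantor, valid since $(\mathbb{R}_{\mathcal{F}},D)$ is a metric space), a Cauchy estimate for the net of fuzzy Riemann sums via a common refinement using translation invariance, subadditivity and positive homogeneity of $D$, completeness of $(\mathbb{R}_{\mathcal{F}},D)$ to produce the limit, and then the levelwise identification $[S(f,P)]_\alpha=[\sum\Delta_{ij}f^-_\alpha,\ \sum\Delta_{ij}f^+_\alpha]$ (valid because all $\Delta_{ij}\geq 0$) together with the fact that $D$-convergence is exactly uniform convergence of the endpoint functions in $\alpha$. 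Two small remarks. First, your closing paragraph verifying conditions (i)--(iv) of the Representation Theorem is redundant: once the limit exists in $\mathbb{R}_{\mathcal{F}}$ it is automatically a fuzzy number, and its level sets are read off from the uniform convergence of the endpoints; nothing further needs to be certified. Second, the one point of genuine care is the one you flag yourself: the phrase ``in the fuzzy Riemann sense'' must be pinned to a definition, and the source \cite{double} (like much of the fuzzy-integration literature) works essentially levelwise or via iterated one-dimensional fuzzy integrals rather than via double tagged partitions, so if you adopt your double-Riemann-sum definition you should either declare it as the definition or add the (easy, but not free) identification with the iterated/levelwise integral. With that caveat stated explicitly, the argument is complete.
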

\section{Main Results}
Now we state our main results. Note that limit and convergence of double sequences are meant in the Pringsheim sense\cite{pringsheim}.
\begin{theorem}\label{thmbasic}
 Let $\{T_{mn}\}$ be a sequence of {\it fuzzy positive linear} operators from $C_{2\pi}^{(\mathcal{F})}(\mathbb{R}^2)$ into itself and there be a corresponding sequence $\{\tilde{T}_{mn}\}$ of positive linear operators from $C_{2\pi}(\mathbb{R}^2)$ into itself satisfying
\begin{eqnarray}\label{tilde}
\left\{T_{mn}(f;u,v)\right\}_\alpha^\mp=\widetilde{T}_{mn}(f_\alpha^\mp;u,v)
\end{eqnarray}
for all $(u,v) \in \mathbb{R}^2, \alpha \in [0, 1], $ and $f \in C_{2\pi}^{(\mathcal{F})}(\mathbb{R}^2)$. If
\begin{equation}\label{hypothesis}
\lim_{m,n\rightarrow\infty}\Vert\widetilde{T}_{mn}(f_{i})-f_{i}\Vert=0,
\end{equation}
holds for $f_0(u,v)=1, f_1(u,v)=\sin u, f_2(u,v)=\sin v,f_3(u,v)=\cos u,f_4(u,v)=\cos v$, then for all $f \in C_{2\pi}^{(\mathcal{F})}(\mathbb{R}^2)$
\begin{equation*}
\lim_{m,n\rightarrow\infty} D^{*}\left(T_{mn}(f),f\right)=0.
\end{equation*}
\end{theorem}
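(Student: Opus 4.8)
The plan is to push everything down to the real-valued level via the representation \eqref{tilde}, and then to run a quantitative (modulus of continuity) form of the classical two-dimensional trigonometric Korovkin argument, being careful that every constant is independent of the level $\alpha$. First, from \eqref{tilde} and the definition of $D^{*}$,
\[
D^{*}(L_{mn}(f),f)=\sup_{\alpha\in[0,1]}\max\Big\{\Vert\widetilde{L}_{mn}(f^{-}_{\alpha})-f^{-}_{\alpha}\Vert,\ \Vert\widetilde{L}_{mn}(f^{+}_{\alpha})-f^{+}_{\alpha}\Vert\Big\},
\]
so it suffices to bound $\Vert\widetilde{L}_{mn}(g)-g\Vert$ for $g=f^{\mp}_{\alpha}\in C_{2\pi}(\mathbb{R}^2)$ in a way that is uniform in $\alpha$. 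Merely applying the ordinary Korovkin theorem to each $f^{\mp}_{\alpha}$ separately would give convergence only for each fixed $\alpha$; the genuine point is the uniformity in $\alpha$, which forces us to exhibit an explicit rate.

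The engine is the test function
\[
\psi_{x,y}(u,v)=\sin^{2}\tfrac{u-x}{2}+\sin^{2}\tfrac{v-y}{2}=f_{0}(u,v)-\tfrac{\cos x}{2}f_{3}(u,v)-\tfrac{\sin x}{2}f_{1}(u,v)-\tfrac{\cos y}{2}f_{4}(u,v)-\tfrac{\sin y}{2}f_{2}(u,v),
\]
which is nonnegative, vanishes at $(u,v)=(x,y)$, and is a linear combination of $f_{0},\dots,f_{4}$ with coefficients of modulus $\le 1$. Using $2\pi$-periodicity to replace $(u,v)$ by its representative $(u',v')$ with $|u'-x|\le\pi$, $|v'-y|\le\pi$, together with the elementary inequality $\sin^{2}\tfrac{t}{2}\ge t^{2}/\pi^{2}$ for $|t|\le\pi$, one obtains for every $g\in C_{2\pi}(\mathbb{R}^2)$ and $\delta\in(0,\pi]$ the pointwise bound
\[
|g(u,v)-g(x,y)|\le\omega(g;\delta)+\frac{2\pi^{2}\Vert g\Vert}{\delta^{2}}\,\psi_{x,y}(u,v).
\]
Now fix $(x,y)$, decompose $\widetilde{L}_{mn}(g;x,y)-g(x,y)=\widetilde{L}_{mn}\big(g(\cdot,\cdot)-g(x,y);x,y\big)+g(x,y)\big(\widetilde{L}_{mn}(f_{0};x,y)-1\big)$, apply positivity and linearity of $\widetilde{L}_{mn}$ to the previous inequality, and use that $\psi_{x,y}(x,y)=0$, so that $|\widetilde{L}_{mn}(\psi_{x,y};x,y)|\le 3\Lambda_{mn}$ with $\Lambda_{mn}:=\max_{0\le i\le4}\Vert\widetilde{L}_{mn}(f_{i})-f_{i}\Vert$, together with $\widetilde{L}_{mn}(f_{0};x,y)\le 1+\Lambda_{mn}$. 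Taking the supremum over $(x,y)$ yields
\[
\Vert\widetilde{L}_{mn}(g)-g\Vert\le\omega(g;\delta)\,(1+\Lambda_{mn})+\Vert g\Vert\,\Lambda_{mn}\Big(1+\frac{6\pi^{2}}{\delta^{2}}\Big).
\]

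Finally, specializing $g=f^{\mp}_{\alpha}$, we have $\Vert f^{\mp}_{\alpha}\Vert\le D^{*}(f,\bar{0})=:M$ for all $\alpha$ (finite since $f$ is bounded by Theorem~\ref{uniform2}) and $\omega(f^{\mp}_{\alpha};\delta)\le\omega^{\mathcal{F}}(f;\delta)$ for all $\alpha$ by Theorem~\ref{thmmodulus1}; hence
\[
D^{*}(L_{mn}(f),f)\le\omega^{\mathcal{F}}(f;\delta)\,(1+\Lambda_{mn})+M\,\Lambda_{mn}\Big(1+\frac{6\pi^{2}}{\delta^{2}}\Big).
\]
Given $\varepsilon>0$, Theorem~\ref{uniform2} (through $\lim_{\delta\to0}\omega^{\mathcal{F}}(f;\delta)=0$) provides $\delta\in(0,\pi]$ with $\omega^{\mathcal{F}}(f;\delta)<\varepsilon$; then $\Lambda_{mn}\to0$ by hypothesis \eqref{hypothesis}, so the right-hand side is eventually $<3\varepsilon$, and since $\varepsilon$ is arbitrary the conclusion follows. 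The main obstacle throughout is the uniformity in $\alpha$, and it is precisely resolved by Theorem~\ref{thmmodulus1} (the fuzzy modulus dominates the moduli of all endpoint functions) and by the uniform boundedness coming from Theorem~\ref{uniform2}; a minor technical nuisance is that $\widetilde{L}_{mn}(f_{0})$ need not equal $f_{0}$, which is the source of the $1+\Lambda_{mn}$ factors.
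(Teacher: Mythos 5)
Your proof is correct and follows essentially the same route as the paper's: the same test function $\sin^{2}\frac{u-x}{2}+\sin^{2}\frac{v-y}{2}$, the same near/far decomposition of $|f^{\mp}_{\alpha}(u,v)-f^{\mp}_{\alpha}(x,y)|$, the same expansion of the operator applied to that test function in terms of $f_{0},\dots,f_{4}$, and the same final supremum over $\alpha$ and over $(x,y)$. The only differences are cosmetic: you carry $\omega(g;\delta)$ and an explicit rate (constant $2\pi^{2}/\delta^{2}$ via $\sin^{2}\frac{t}{2}\geq t^{2}/\pi^{2}$) where the paper uses a bare $\varepsilon$ and the constant $2/\sin^{2}(\delta/2)$, and you are more explicit than the paper about why the choice of $\delta$ and the bounding constants are uniform in $\alpha$ (via Theorems \ref{thmmodulus1} and \ref{uniform2}), a point the paper's proof leaves implicit.
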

\begin{proof}
Let $f\in C_{2\pi}^{(\mathcal{F})}(\mathbb{R}^2)$ and condition (\ref{hypothesis}) hold. Assume that $I$ and $J$ are closed subintervals of length $2\pi$ of $\mathbb{R}$. Fix $(u,v)\in I\times J$. Then, for given $\varepsilon>0$ there exists $\delta>0$ so that
\begin{eqnarray*}
|f_{\alpha}^{\mp}(x,y)-f_{\alpha}^{\mp}(u,v)|<\varepsilon+\frac{2B_{\alpha}^{\mp}}{\sin^2\left(\frac{\delta}{2}\right)}\left\{\sin^2\left(\frac{x-u}{2}\right)+\sin^2\left(\frac{y-v}{2}\right)\right\}
\end{eqnarray*}
where $B_{\alpha}^{\mp}:=\Vert f_{\alpha}^{\mp}\Vert$. Then, we have
\begin{eqnarray*}
&&\left|\widetilde{T}_{mn}(f^{\mp}_{\alpha};u,v)-f^{\mp}_{\alpha}(u,v)\right|
\\&\leq&
\widetilde{T}_{mn}(|f^{\mp}_{\alpha}(x,y)-f^{\mp}_{\alpha}(u,v)|;u,v) +B_{\alpha}^{\mp}\left|\widetilde{T}_{mn}(f_0;u,v)-f_0(u,v)\right|
\\&\leq&
\widetilde{T}_{mn}\left(\varepsilon+\frac{2B_{\alpha}^{\mp}}{\sin^2\left(\frac{\delta}{2}\right)}\left\{\sin^2\left(\frac{x-u}{2}\right)+\sin^2\left(\frac{y-v}{2}\right)\right\};u,v\right)+B_{\alpha}^{\mp}\left|\widetilde{T}_{mn}(f_0;u,v)-f_0(u,v)\right|
\\&\leq&
\varepsilon+\left(\varepsilon+B_{\alpha}^{\mp}\right)\left|\widetilde{T}_{mn}(f_0;u,v)-f_0(u,v)\right|+\frac{2B_{\alpha}^{\mp}}{\sin^2\left(\frac{\delta}{2}\right)}\widetilde{T}_{mn}\left(\sin^2\left(\frac{x-u}{2}\right)+\sin^2\left(\frac{y-v}{2}\right);u,v\right)
\\&\leq&
\varepsilon+(\varepsilon+B_{\alpha}^{\mp})\left|\widetilde{T}_{mn}(f_0;u,v)-f_0(u,v)\right|+\frac{B_{\alpha}^{\mp}}{\sin^2\left(\frac{\delta}{2}\right)}\left\{2\left| \widetilde{T}_{mn}(f_0;u,v)-f_0(u,v)\right|\right.
\\&&\qquad\qquad\qquad\qquad\qquad\qquad\qquad\qquad\qquad\qquad\qquad\quad
+|\sin u|\left|\widetilde{T}_{mn}(f_1;u,v)-f_1(u,v)\right|
\\&& \qquad\qquad\qquad\qquad\qquad\qquad\qquad\qquad\qquad\qquad\qquad\quad
 +|\sin v|\left|\widetilde{T}_{mn}(f_2;u,v)-f_2(u,v)\right|
 \\&&\qquad\qquad\qquad\qquad\qquad\qquad\qquad\qquad\qquad\qquad\qquad\quad
+|\cos u|\left|\widetilde{T}_{mn}(f_3;u,v)-f_3(u,v)\right|
\\&&\qquad\qquad\qquad\qquad\qquad\qquad\qquad\qquad\qquad\qquad\qquad\quad
 \left. +|\cos v|\left|\widetilde{T}_{mn}(f_4;u,v)-f_4(u,v)\right|\right\}
\\&\leq&
 \varepsilon+H_{\alpha}^{\mp}(\varepsilon)\left\{\sum_{i=0}^4 \left|\widetilde{T}_{mn}(f_i;u,v)-f_i(u,v)\right|\right\}
\end{eqnarray*}%
where $H_{\alpha}^{\mp}(\varepsilon):=\varepsilon+ B_{\alpha}^{\mp}+ 2B_{\alpha}^{\mp}\big/\sin^2(\delta/2)$. Considering condition \eqref{tilde} if we take $\sup_{\alpha\in[0,1]}\max$ we get
\begin{eqnarray*}
D\left(T_{mn}(f;u,v),f\right)\leq \varepsilon+H(\varepsilon)\left\{\sum_{i=0}^4 \left|\widetilde{T}_{mn}(f_i;u,v)-f_i(u,v)\right|\right\}
\end{eqnarray*}
where $H(\varepsilon):=\sup\limits_{\alpha\in[0,1]}\max\left\{H_{\alpha}^{-}(\varepsilon),H_{\alpha}^{+}(\varepsilon)\right\}$. Finally taking supremum over $(u,v)$ we get
\begin{eqnarray*}
D^*\left(T_{mn}(f),f\right)\leq \varepsilon+H(\varepsilon)\left\{\sum_{i=0}^4 \left\Vert\widetilde{T}_{mn}(f_i)-f_i\right\Vert\right\}.
\end{eqnarray*}
This implies that $D^*\left(T_{mn}(f),f\right)\to 0$ as $m,n\to \infty$ by condition \eqref{hypothesis} of the theorem. The proof is completed.
\end{proof}
\begin{theorem}\label{thmmodulus2}
Let $(T_{mn})$ be a sequence of {\it fuzzy positive linear} operators from $C_{2\pi}^{(\mathcal{F})}(\mathbb{R}^2)$ to  $C_{2\pi}^{(\mathcal{F})}(\mathbb{R}^2)$ so that condition \eqref{tilde} holds. If
\begin{itemize}
\item[(i)] $\lim\limits_{m,n\rightarrow\infty}\Vert\widetilde{T}_{mn}(f_{0})-f_{0}\Vert=0$, \hspace{1cm} $\left(f_{0}(u,v)=1\right)$
\item[(ii)] $\lim\limits_{m,n\rightarrow\infty}\omega^{\mathcal{F}}(f; \gamma_{mn})=0$ where $\gamma_{mn}=\sqrt{\left\Vert\widetilde{T}_{mn}(\varphi)\right\Vert}$ with $\varphi(u,v)=\sin^2\left(\frac{x-u}{2}\right)+\sin^2\left(\frac{y-v}{2}\right)$,
\end{itemize}
then for all $f\in C_{2\pi}^{(\mathcal{F})}(\mathbb{R}^2)$
\begin{eqnarray*}
\lim_{m,n\rightarrow\infty}D^*\left(T_{mn}(f), f\right)=0.
\end{eqnarray*}
\end{theorem}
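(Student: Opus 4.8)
The idea is to repeat the argument of Theorem~\ref{thmbasic}, passing through the parametric endpoints $f^{\mp}_{\alpha}\in C_{2\pi}(\mathbb{R}^2)$ via \eqref{tilde} and then taking $\sup_{\alpha}\max$, but replacing its ``$\varepsilon$--$\delta$'' bound on $|f^{\mp}_{\alpha}(u,v)-f^{\mp}_{\alpha}(x,y)|$ by a single estimate through the modulus of continuity. Fix $f\in C_{2\pi}^{(\mathcal{F})}(\mathbb{R}^2)$ and $(x,y)\in\mathbb{R}^2$, put $\varphi(u,v)=\sin^2\frac{u-x}{2}+\sin^2\frac{v-y}{2}$, and let $M:=D^*(f,\bar{0})$, which is finite by Theorem~\ref{uniform2}. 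By \eqref{tilde} and the definitions of $D^*$ and $D$,
\[
D^*\big(L_{mn}(f),f\big)=\sup_{(x,y)}\sup_{\alpha}\max\big\{\big|\widetilde{L}_{mn}(f^{-}_{\alpha};x,y)-f^{-}_{\alpha}(x,y)\big|,\ \big|\widetilde{L}_{mn}(f^{+}_{\alpha};x,y)-f^{+}_{\alpha}(x,y)\big|\big\},
\]
so it suffices to produce a uniform scalar bound for the quantity under the supremum.

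The core of the proof is the pointwise inequality
\[
\big|f^{\mp}_{\alpha}(u,v)-f^{\mp}_{\alpha}(x,y)\big|\ \leq\ \Big(1+\tfrac{\pi^{2}}{\delta^{2}}\,\varphi(u,v)\Big)\,\omega^{\mathcal{F}}(f;\delta),\qquad\delta>0,
\]
for all $(u,v)\in\mathbb{R}^2$ and $\alpha\in[0,1]$. Both sides are $2\pi$-periodic in $u$ and in $v$ (using periodicity of $f^{\mp}_{\alpha}$ and $\sin^2(\theta\pm k\pi)=\sin^2\theta$), so it is enough to treat the representative with $|u-x|\leq\pi$ and $|v-y|\leq\pi$; for it, Jordan's inequality gives $(u-x)^2\leq\pi^2\sin^2\frac{u-x}{2}$ and $(v-y)^2\leq\pi^2\sin^2\frac{v-y}{2}$, hence the Euclidean distance $d$ between $(u,v)$ and $(x,y)$ satisfies $d^2\leq\pi^2\varphi(u,v)$. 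Since $|f^{\mp}_{\alpha}(u,v)-f^{\mp}_{\alpha}(x,y)|\leq D(f(u,v),f(x,y))\leq\omega^{\mathcal{F}}(f;d)$ and $\omega^{\mathcal{F}}(f;d)\leq(1+d^2/\delta^2)\,\omega^{\mathcal{F}}(f;\delta)$ — which follows by treating the cases $d<\delta$ and $d\geq\delta$ separately and using the semi-additivity $\omega^{\mathcal{F}}(f;\lambda\delta)\leq(1+\lambda)\,\omega^{\mathcal{F}}(f;\delta)$ of the modulus \cite{anastassiou} — replacing $d^2$ by $\pi^2\varphi(u,v)$ finishes the claim.

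Finally I would apply the positive linear operator $\widetilde{L}_{mn}$ in the variables $(u,v)$ to this inequality and, exactly as in the proof of Theorem~\ref{thmbasic}, add the contribution coming from $\widetilde{L}_{mn}(f_0)\neq f_0$, which gives
\[
\big|\widetilde{L}_{mn}(f^{\mp}_{\alpha};x,y)-f^{\mp}_{\alpha}(x,y)\big|\leq\omega^{\mathcal{F}}(f;\delta)\Big(\widetilde{L}_{mn}(f_0;x,y)+\tfrac{\pi^{2}}{\delta^{2}}\widetilde{L}_{mn}(\varphi;x,y)\Big)+M\,\big|\widetilde{L}_{mn}(f_0;x,y)-f_0(x,y)\big|.
\]
Choosing $\delta=\gamma_{mn}=\sqrt{\|\widetilde{L}_{mn}(\varphi)\|}$ makes $\tfrac{\pi^{2}}{\delta^{2}}\widetilde{L}_{mn}(\varphi;x,y)\leq\pi^{2}$, while $\widetilde{L}_{mn}(f_0;x,y)\leq 1+\|\widetilde{L}_{mn}(f_0)-f_0\|$ since $f_0\equiv 1$. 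Taking $\sup_{(x,y)}\sup_{\alpha}\max$, using \eqref{tilde} and Theorem~\ref{thmmodulus1} to identify $\sup_{\alpha}\max\{\omega(f^{-}_{\alpha};\gamma_{mn}),\omega(f^{+}_{\alpha};\gamma_{mn})\}=\omega^{\mathcal{F}}(f;\gamma_{mn})$, and noting that $\sup_{\alpha}\max\{M^{-}_{\alpha},M^{+}_{\alpha}\}=M$, we arrive at
\[
D^*\big(L_{mn}(f),f\big)\ \leq\ \big(1+\pi^{2}+\|\widetilde{L}_{mn}(f_0)-f_0\|\big)\,\omega^{\mathcal{F}}(f;\gamma_{mn})+M\,\|\widetilde{L}_{mn}(f_0)-f_0\|,
\]
whose right-hand side tends to $0$ by hypotheses (i) and (ii). The one delicate point is the periodic pointwise estimate — ensuring that the non-periodic quantity $d^2$ is legitimately replaced by the $2\pi$-periodic dominating function $\pi^2\varphi(u,v)$ before $\widetilde{L}_{mn}$ is applied. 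The degenerate case $\gamma_{mn}=0$ (equivalently $\widetilde{L}_{mn}(\varphi)\equiv 0$) is handled separately by letting $\delta\to 0^+$ in the estimate preceding the choice of $\delta$, using that $\widetilde{L}_{mn}(f_0;x,y)$ is bounded once $m,n$ are large (by (i)).
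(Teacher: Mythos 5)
Your proposal is correct and follows essentially the same route as the paper: the same pointwise estimate $|f^{\mp}_{\alpha}(u,v)-f^{\mp}_{\alpha}(x,y)|\leq\bigl(1+\pi^{2}\varphi(u,v)/\delta^{2}\bigr)\omega(\cdot;\delta)$ (which the paper simply cites from the proof of Theorem 9 of \cite{demirci}, whereas you derive it via periodicity and Jordan's inequality), the same choice $\delta=\gamma_{mn}$, and the same passage to $D^{*}$ through \eqref{tilde} and Theorem \ref{thmmodulus1}. Your extra care with the degenerate case $\gamma_{mn}=0$ is a small refinement the paper omits, but it does not change the argument.
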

\begin{proof}
Assume $f\in C_{2\pi}^{(\mathcal{F})}(\mathbb{R}^2)$  and $(u,v)\in[-\pi, \pi]\times[-\pi, \pi]$. Then, $f^{\mp}_{\alpha}\in C_{2\pi}(\mathbb{R}^2)$ and from the proof of Theorem 9 in \cite{demirci} we get
\begin{eqnarray*}
|f_{\alpha}^{\mp}(x,y)-f_{\alpha}^{\mp}(u,v)|\leq \left(1+\pi^2\frac{\sin^2\left(\frac{x-u}{2}\right)+\sin^2\left(\frac{y-v}{2}\right)}{\delta^2}\right)\omega(f_{\alpha}^{\mp}; \delta)
\end{eqnarray*}
and
\begin{eqnarray*}
&&\left|\widetilde{T}_{mn}(f^{\mp}_{\alpha};u,v)-f^{\mp}_{\alpha}(u,v)\right|\leq \widetilde{T}_{mn}\left(|f^{\mp}_{\alpha}(x,y)-f^{\mp}_{\alpha}(u,v)|;u,v\right) +B_{\alpha}^{\mp}\left|\widetilde{T}_{mn}(f_0;u,v)-f_0(u,v)\right|
\\&\leq&
\omega(f_{\alpha}^{\mp};\delta)\widetilde{T}_{mn}\left(1+(\pi/\delta)^2\left\{\sin^2\left(\frac{x-u}{2}\right)\!+\!\sin^2\left(\frac{y-v}{2}\right)\right\};u,v\right) +B_{\alpha}^{\mp}\left|\widetilde{T}_{mn}(f_0;u,v)-f_0(u,v)\right|
\\&\leq&
\left|\widetilde{T}_{mn}(f_0;u,v)-f_0(u,v)\right|\omega(f_{\alpha}^{\mp};\delta) +\omega(f_{\alpha}^{\mp};\delta) +\frac{\pi^2}{\delta^2}\omega(f_{\alpha}^{\mp};\delta)\widetilde{T}_{mn}\left(\varphi;u,v\right)+B_{\alpha}^{\mp}\left|\widetilde{T}_{mn}(f_0;u,v)-f_0(u,v)\right|
\\&\leq&
\left|\widetilde{T}_{mn}(f_0;u,v)-f_0(u,v)\right|\omega(f_{\alpha}^{\mp};\gamma_{mn})+(1+\pi^2)\omega(f_{\alpha}^{\mp};\gamma_{mn})+B_{\alpha}^{\mp}\left|\widetilde{T}_{mn}(f_0;u,v)-f_0(u,v)\right|
\end{eqnarray*}
by putting $\delta:=\gamma_{mn}$ in the last part. Then, taking $\sup_{\alpha\in[0,1]}\max$ of both sides we get
\begin{eqnarray*}
D\left(T_{mn}(f;u,v),f\right)\leq H\left\{\left|\widetilde{T}_{mn}(f_0;u,v)-f_0(u,v)\right|\omega^{\mathcal{F}}(f;\gamma_{mn})+\omega^{\mathcal{F}}(f;\gamma_{mn})+\left|\widetilde{T}_{mn}(f_0;u,v)-f_0(u,v)\right|\right\}
\end{eqnarray*}
where $H=\max\{1+\pi^2, \sup_{\alpha\in[0,1]}\max\{B_{\alpha}^{-},B_{\alpha}^{+}\}\}$ in view of Thoerem \ref{thmmodulus1}. Hence, we get
\begin{eqnarray*}
D^*\left(T_{mn}(f),f\right)\leq H\left\{\Vert\widetilde{T}_{mn}(f_0)-f_0\Vert\omega^{\mathcal{F}}(f;\gamma_{mn})+\omega^{\mathcal{F}}(f;\gamma_{mn})+\Vert\widetilde{T}_{mn}(f_0)-f_0\Vert\right\}
\end{eqnarray*}
Taking limits of both sides as $m,n\to\infty$ and considering conditions $(i)-(ii)$ of Theorem \ref{thmmodulus2}, we complete the proof.
\end{proof}
\section{Approximation via double level Fourier series of fuzzy valued functions}
Following \cite[Definition 4.1]{levelfourier}, we introduce double level Fourier series of fuzzy valued functions of two variables.
\begin{definition}
Let $f\in C_{2\pi}^{(\mathcal{F})}(\mathbb{R}^2)$. Double level Fourier series of $f$ is given by the pair of series
\begin{eqnarray}\label{levelfourier}
\begin{split}
\frac{a^{\mp}_{00}(\alpha)}{4}&+\frac{1}{2}\sum_{j=1}^{\infty}(a^{\mp}_{j0}(\alpha)\cos ju+c^{\mp}_{j0}(\alpha)\sin ju) +\frac{1}{2}\sum_{k=1}^{\infty}(a^{\mp}_{0k}(\alpha)\cos kv+b^{\mp}_{0k}(\alpha)\sin kv)
\\&+\mathlarger{\sum_{j=1}^{\infty}\sum_{k=1}^{\infty}}\left\{
   \begin{array}{ll}
     a^{\mp}_{jk}(\alpha)\cos ju\cos kv+b^{\mp}_{jk}(\alpha)\cos ju\sin kv\\
     +c^{\mp}_{jk}(\alpha)\sin ju\cos kv+d^{\mp}_{jk}(\alpha)\sin ju\sin kv
   \end{array}
   \right\}
   \end{split}
\end{eqnarray}
where
\begin{eqnarray*}
&&a^{\mp}_{jk}(\alpha)=\frac{1}{\pi^2}\int_{-\pi}^{\pi}\int_{-\pi}^{\pi}f^{\mp}_{\alpha}(u,v)\cos ju\cos kv dudv, \ \ b^{\mp}_{jk}(\alpha)=\frac{1}{\pi^2}\int_{-\pi}^{\pi}\int_{-\pi}^{\pi}f^{\mp}_{\alpha}(u,v)\cos ju\sin kv dudv
\\
&&c^{\mp}_{jk}(\alpha)=\frac{1}{\pi^2}\int_{-\pi}^{\pi}\int_{-\pi}^{\pi}f^{\mp}_{\alpha}(u,v)\sin ju\cos kv dudv, \ \ d^{\mp}_{jk}(\alpha)=\frac{1}{\pi^2}\int_{-\pi}^{\pi}\int_{-\pi}^{\pi}f^{\mp}_{\alpha}(u,v)\sin ju\sin kv dudv.
\end{eqnarray*}
"-" and "+" signed series in \eqref{levelfourier} are said to be left and right double level Fourier series of $f$, respectively.
\end{definition}
\noindent Levelwise approach to Fourier series of fuzzy valued functions provides researchers convenience in approximation of fuzzy valued functions through level sets. While being more advantageous in calculations via level sets when compared to fuzzy Fourier series\cite{fuzzyfourier}, level Fourier series(as pair of functions with variable $\alpha$) may not represent a fuzzy number. Besides, like the Fourier series in both classical and fuzzy settings, level Fourier series of fuzzy valued functions may not converge to the fuzzy valued function in hand. In one dimensional case\cite{levelfourier}, summation methods are used to achieve the fuzzification of level Fourier series and fuzzy trigonometric Korovkin approximation theorems are utilized to recover the convergence. Analogous way may be utilized also in two dimensional case to deal with fuzzification and convergence of double level Fourier series of fuzzy valued functions. In this way, now we define the double fuzzy Fej\'{e}r operator and double fuzzy Abel-Poisson convolution operator by means of taking Ces\`{a}ro and Abel means of the double level Fourier series, respectively. Here we remind the reader that Ces\`{a}ro and Abel means of a double series $\sum_{j,k=0}^{\infty}u_{jk}$ is defined by
\begin{eqnarray*}
\sigma_{mn}=\frac{1}{(m+1)(n+1)}\sum_{j=0}^{m}\sum_{k=0}^{n}t_{jk}\qquad \tau_{rs}=\sum_{j=0}^{\infty}\sum_{k=0}^{\infty}u_{jk}r^js^k
\end{eqnarray*}
where $\{t_{mn}\}$ is the sequence of partial sums of $\sum_{j,k=0}^{\infty}u_{jk}$ and $r,s\in(0,1)$. For more information concerning summation methods with applications see \cite{boos,mursaleensum,bor1,bor2}.

Let $f:\mathbb{R}^2\to \mathbb{R}_{\mathcal{F}}$. Double fuzzy Fej\'{e}r operator is defined by
\begin{eqnarray*}
F_{mn}(f;u,v)=\frac{1}{4\pi^2}\int_{-\pi}^{\pi}\int_{-\pi}^{\pi}f(u-x,v-y)\phi_m(x)\phi_n(y)dxdy
\end{eqnarray*}
where
\begin{eqnarray*}
\phi_m(u)=
\begin{cases}
\frac{\sin^2[(m+1)u/2]}{(m+1)\sin^2[u/2]} \quad &\text{if $u$ is not a multiple of $2\pi$}\\[1mm]
m+1  &\text{if $u$ is a multiple of $2\pi$.}
\end{cases}
\end{eqnarray*}
Now we apply Theorem \ref{thmbasic} to the double fuzzy Fej\'{e}r operator. Since Fej\'{e}r kernel $\{\phi_m\}$ is a positive kernel, $\{F_{mn}\}$ defines a sequence of {\it fuzzy positive linear} operators and there corresponds a sequence of positive linear operators $\{\widetilde{F}_{mn}\}$ such that
\begin{eqnarray*}
\left\{F_{mn}(f;u,v)\right\}_\alpha^\mp=\widetilde{F}_{mn}(f_\alpha^\mp;u,v)=\frac{1}{4\pi^2}\int_{-\pi}^{\pi}\int_{-\pi}^{\pi}f_\alpha^\mp(u-x,v-y)\phi_m(x)\phi_n(y)dxdy
\end{eqnarray*}
which satisfies the property \eqref{tilde}. Furthermore, since
\begin{eqnarray*}
&&\widetilde{F}_{mn}(f_0;u,v)=f_0\\
&&\widetilde{F}_{mn}(f_1;u,v)=\frac{m}{m+1}f_1(u,v) \qquad \widetilde{F}_{mn}(f_2;u,v)=\frac{n}{n+1}f_2(u,v)\\
&&\widetilde{F}_{mn}(f_3;u,v)=\frac{m}{m+1}f_3(u,v) \qquad \widetilde{F}_{mn}(f_4;u,v)=\frac{n}{n+1}f_4(u,v),
\end{eqnarray*}
we have
\begin{eqnarray*}
\Vert\widetilde{F}_{mn}(f_{i})-f_{i}\Vert=
\begin{cases}
0, \quad &if \quad i=0\\
\frac{1}{m+1},  &if \quad i=1,3\\
\frac{1}{n+1} &if\quad i=2,4
\end{cases}
\end{eqnarray*}
and this implies $\lim_{m,n}\Vert\widetilde{F}_{mn}(f_{i})-f_{i}\Vert=0$. Hence, \eqref{tilde} and \eqref{hypothesis} of Theorem \ref{thmbasic} are fulfilled and we conclude $F_{mn}(f)\to f$ uniformly. We note that convergence of sequence $\{F_{mn}\}$ may also be verified  by Theorem \ref{thmmodulus2}. We have $\lim_{m,n}\Vert\widetilde{F}_{mn}(f_{0})-f_{0}\Vert=0$, and since
\begin{eqnarray*}
\lim\limits_{m,n\rightarrow\infty}\gamma_{mn}=\lim\limits_{m,n\rightarrow\infty}\sqrt{\left\Vert\widetilde{F}_{mn}\left(\sin^2\left(\frac{x-u}{2}\right)+\sin^2\left(\frac{y-v}{2}\right);u,v\right)\right\Vert}=
\lim\limits_{m,n\rightarrow\infty}\sqrt{\frac{1}{2(m+1)}+\frac{1}{2(n+1)}}=0,
\end{eqnarray*}
we have $\lim_{m,n}\omega^{\mathcal{F}}(f; \gamma_{mn})=0$ by uniform continuity of $f$ on $\mathbb{R}^2$(see Theorem \ref{uniform2}). Hence $(i)-(ii)$ of Theorem \ref{thmmodulus2} are satisfied and this implies $F_{mn}(f)\to f$ uniformly.

Now we define double fuzzy Abel-Poisson convolution operator via Abel means of double level Fourier series. Let $f:\mathbb{R}^2\to \mathbb{R}_{\mathcal{F}}$. Double fuzzy Abel-Poisson operator is defined by
\begin{eqnarray*}
P_{rs}(f;u,v)=\frac{1}{4\pi^2}\int_{-\pi}^{\pi}\int_{-\pi}^{\pi}f(u-x,v-y)\psi_r(x)\psi_s(y)dxdy \qquad (0\leq r <1,\ 0\leq s <1)
\end{eqnarray*}
where $\psi_r(u)=\frac{1-r^2}{1-2r\cos u+r^2}\cdot$ We claim that $\lim_{r,s\to1^-}P_{rs}(f)=f$. We use the sequential criterion for the limit in metric spaces to investigate the limit $\lim_{r,s\to1^-}P_{rs}(f)$ and hence we consider the sequence of operators
\begin{eqnarray*}
T_{mn}(f;u,v)=\frac{1}{4\pi^2}\int_{-\pi}^{\pi}\int_{-\pi}^{\pi}f(u-x,v-y)\psi_{r_m}(x)\psi_{s_n}(y)dxdy
\end{eqnarray*}
where $(r_m)$ and $(s_n)$ are arbitrary sequences on $[0,1)$ such that $\lim_{m} r_m=1$ and $\lim_{n} s_n=1$. Now we again apply Theorem \ref{thmbasic}. As the case in Fej\'{e}r kernels, Abel-Poisson kernel $\{\psi_{r_m}\}$ is also positive and hence $\{T_{mn}\}$ defines a sequence of {\it fuzzy positive linear} operators and there corresponds a sequence of positive linear operators $\{\widetilde{T}_{mn}\}$ such that
\begin{eqnarray*}
\left\{T_{mn}(f;u,v)\right\}_\alpha^\mp=\widetilde{T}_{mn}(f_\alpha^\mp;u,v)=\frac{1}{4\pi^2}\int_{-\pi}^{\pi}\int_{-\pi}^{\pi}f_\alpha^\mp(u-x,v-y)\psi_{r_m}(x)\psi_{s_n}(y)dxdy.
\end{eqnarray*}
Furthermore, since
\begin{eqnarray*}
&&\widetilde{T}_{mn}(f_0;u,v)=f_0\\
&&\widetilde{T}_{mn}(f_1;u,v)=r_mf_1(u,v) \qquad \widetilde{T}_{mn}(f_2;u,v)=s_nf_2(u,v)\\
&&\widetilde{T}_{mn}(f_3;u,v)=r_mf_3(u,v) \qquad \widetilde{T}_{mn}(f_4;u,v)=s_nf_4(u,v),
\end{eqnarray*}
we have
\begin{eqnarray*}
\Vert\widetilde{T}_{mn}(f_{i})-f_{i}\Vert=
\begin{cases}
0, \quad &if \quad i=0\\
1-r_m,  &if \quad i=1,3\\
1-s_n &if\quad i=2,4
\end{cases}
\end{eqnarray*}
and this implies $\lim_{m,n}\Vert\widetilde{T}_{mn}(f_{i})-f_{i}\Vert=0$. Hence, \eqref{tilde} and \eqref{hypothesis} of Theorem \ref{thmbasic} are fulfilled and we obtain that $T_{mn}(f)\to f$ uniformly. Since $(r_m)$ and $(s_n)$ are arbitrary, we conclude $\lim_{r,s\to1^-}P_{rs}(f)=f$. Here we note that the approximation $T_{mn}(f)\to f$ may also be checked by Theorem \ref{thmmodulus2}. We have $\lim_{m,n}\Vert\widetilde{T}_{mn}(f_{0})-f_{0}\Vert=0$, and by the fact that
\begin{eqnarray*}
\lim\limits_{m,n\rightarrow\infty}\gamma_{mn}=\lim\limits_{m,n\rightarrow\infty}\sqrt{\left\Vert\widetilde{T}_{mn}\left(\sin^2\left(\frac{x-u}{2}\right)+\sin^2\left(\frac{y-v}{2}\right);u,v\right)\right\Vert}=
\lim\limits_{m,n\rightarrow\infty}\sqrt{\frac{1-r_m}{2}+\frac{1-s_n}{2}}=0,
\end{eqnarray*}
we have $\lim_{m,n}\omega^{\mathcal{F}}(f; \gamma_{mn})=0$ by virtue of uniform continuity of $f$ on $\mathbb{R}^2$(see Theorem \ref{uniform2}). Hence $(i)-(ii)$ of Theorem \ref{thmmodulus2} are satisfied and this implies $T_{mn}(f)\to f$ uniformly. Again by the sequential criterion for limit we have $\lim_{r,s\to1^-}P_{rs}(f)=f$.

\end{document}